\theoremstyle{plain}
\newtheorem{theorem}{Theorem}
\newtheorem{lemma}[theorem]{Lemma}
\newtheorem{corollary}[theorem]{Corollary}
\def\Cref#1{Construction~$\ref{#1}$}
\def\cref#1{Corollary~$\ref{#1}$}
\renewcommand{\geq}{\geqslant}
\renewcommand{\leq}{\leqslant}
\def\dfrac#1#2{\lower0.15ex\hbox{\large$\frac{#1}{#2}$}} 
\title{Lower bounds on the sizes of defining sets in full $n$-Latin squares and full designs}
\author{Nicholas J. Cavenagh 
 \\
Department of Mathematics, \\
The University of Waikato, \\
Private Bag 3105, \\
Hamilton 3240, New Zealand \\
\texttt{nickc@waikato.ac.nz }
}
\begin{document}

\date{}
\maketitle

\begin{abstract}
The full $n$-Latin square is the $n\times n$ array with symbols $1,2,\dots ,n$ in each cell. 
In this paper we show, as part of a more general result, that any defining set for the full $n$-Latin square has size $n^3(1-o(1))$. The full design $N(v,k)$ is the unique simple design with parameters $(v,k,{v-2 \choose k-2})$; that is, the design consisting of all subsets of size $k$ from a set of size $v$. We show that any defining set for the full design $N(v,k)$ has size ${v\choose k}(1-o(1))$ (as $v-k$ becomes large). 
These results improve existing results and are asymptotically optimal. In particular, the latter result solves an open problem given in (Donovan, Lefevre, et al, 2009), in which it is conjectured that the proportion of blocks in the complement of a full design will asymptotically approach zero. 
\end{abstract}

\section{Introduction}
For convenience, we adopt the notation $N(a)$ for the set of positive integers $\{1, 2, . . . , a\}$.

We prove the key result on full $n$-Latin squares in terms of a more general combinatorial array. 
A (partial) $(m,n,t)$-balanced rectangle is an $m\times n$ array of multisets 
of size (at most) $t$ such that each element of $N(t)$ occurs (at most) $n$ times in each row and (at most) $m$ times per column.  
An $(m,n,t)$-balanced rectangle may be easily constructed by placing $N(t)$ in every cell; we call this the {\em full} $(m,n,t)$-balanced rectangle or $F_{m,n,t}$. 

A (partial) $n$-Latin square is a (partial) $(n,n,n)$-balanced rectangle and the {\em full} $n$-Latin square is the $n$-Latin square with $N(n)$ in every cell.   

A defining set $D$ of an $(m,n,t)$-balanced Latin rectangle $L$ is a partial $(n,m,t)$-balanced Latin rectangle with unique completion to $L$.  
In the example below, the partial $(2,3,3)$-balanced Latin rectangle 
(on the left) is not a defining set for $F_{2,3,3}$ (in the centre) as it also completes to another $(2,3,3)$-balanced 
Latin rectangle (on the right). 

\[ 
 \begin{tabular}{| l | l | l |}\hline
  1&  &   \\\hline
  & 2 & 2,3   \\\hline
 \end{tabular}
\quad
 \begin{tabular}{| l | l | l |}\hline
  1,2,3& 1,2,3 &1,2,3   \\\hline
1,2,3  & 1,2,3 & 1,2,3   \\\hline
 \end{tabular}
\quad
 \begin{tabular}{| l | l | l |}\hline
  1,1,2& 2,3,3 &1,2,3   \\\hline
2,3,3  & 1,1,3 & 1,2,3   \\\hline
 \end{tabular}
\]

The {\it size} or the number of entries in a partial $(m,n,t)$-Latin rectangle $L$, denoted by $|L|$, is the cardinality of the multiset sum of the multisets in each cell of $L$ (i.e. the sum of multiplicities of each element over all the cells). 
To clarify, the above structures have sizes $4$, $12$ and $12$, respectively.

We identify any partial $n$-Latin square as {\it saturated} if each cell is either empty or contains $N(n)$. 
 Otherwise it is  {\it non-saturated}. 
In \cite{CRa}, it was shown that  a saturated critical set for the full $n$-Latin square has size exactly equal to $n^3-2n^2-n$ and that any defining set has size at least $(n^3-2n^2+2n)/2.$
We significantly improve this lower bound in Corollary \ref{bbound}, showing in fact that the complement of a defining set for $F_{n,n,n}$ asymptotically tends to zero in proportion to $|F_{n,n,n}|=n^3$.  
The bound presented is unlikely to be exact; in \cite{CRa} defining sets of size $(n-1)^3+1$ of the full $n$-Latin square are constructed for $n\geq 2$; this remains the smallest known construction. 
The structure of minimal defining sets of $F_{m,n,2}$ is completely determined in \cite{CR2}.

The study of critical sets in full $n$-Latin squares has the potential to yield information on critical sets in Latin squares; for example, intersecting a defining set of $F_{n,n,n}$ with any Latin square $L$ of order $n$ results in a defining set of $L$. See \cite{Cav,Ke} for surveys on defining sets in Latin squares. 

The idea of a full $n$-Latin square was motivated by the analogous concept of a full design (see \cite{AMM, DY, DLWY, 23, GS2, GSS, KY, LW}). 
A $(v,k,\lambda)$-design is a collection of subsets of $N(v)$ (which are called {\em blocks}), each of size $k$, 
such that each (unordered) pair from $N(v)$ occurs in exactly $\lambda$ blocks. 
A design is {\em simple} if there are no repeated blocks. 
There is a unique simple $(v,k,{v-2\choose k-2})$-design consisting of all the possible  
subsets of size $k$ from the foundation set $N(v)$; we call this the {\em full} design $F(v,k)$. 

A {\em defining set} for a $(v,k,\lambda)$ design $D$ is a subset of $D$ which is not the subset of another design with the same parameters (where repeated blocks are allowed). See \cite{GS} for a survey of known results on defining sets in designs. 
Analogously to full Latin squares, the intersection of the defining set of the full design $F(v,k)$ with any simple design $D$ with parameters $v$ and $k$ gives a defining set for $D$.  

The previous best known lower bound on the size of a defining set in the full design (for all $v\geq 3$) is $F(v,3)$ is $3{v\choose 3}/7$ (\cite{23}). 
In the same paper it is conjectured in the conclusion that: ``\dots the proportion of blocks in the complement will asymptotically approach zero.'' 
We prove this conjecture for any full design $F(v,k)$, where $v-k$ tends to infinity, in Theorem \ref{abigone}. 
 The smallest known size of a constructed defining set for $F(v,3)$ is $(v^3-6v^2+5v+6)/6$ (\cite{23}).  In \cite{GS3} defining sets for $F(v,k)$ of size 
 ${v\choose k}-(v^2+3v-2vk+2k^2-8)/2$ are given whenever $v\geq k+2\geq 5$.  
In \cite{LW} two constructions of minimal defining sets of $F(v,k)$ are given, of size ${v \choose k}-(v^2-v-k^2+k+2)/2$ (where $v\geq k+3$) and of size 
${v \choose k}+(k-1)(k+2)/2-kv$ (where $v\geq k+2$).

Although this paper gives asymptotically optimal solutions for the sizes of the smallest defining sets in both full designs and full $n$-Latin squares, it remains an open problem to determine exact bounds. 
The simplicity of constructions for the smallest defining sets known so far suggests that elegant formulae may exist for these bounds.


\section{Defining sets of full $(m,n,t)$-Latin rectangles}

In the following, $D$ is a simple, partial $(m,n,t)$-Latin rectangle. That is, 
$D$ always has a completion to the full $(m,n,t)$-Latin rectangle $F_{m,n,t}$.    
For each pair $\{a,b\}\subset N(t)$,
we define $S_{a,b}(D)$ (or $S_{a,b}$ if the context is clear) to be the set of cells in $D$ which contain {\em neither} $a$ nor $b$. 

\begin{lemma}
Let $t\geq 2$. 
Suppose that $D$ is a defining set for $F_{m,n,t}$. For each pair $\{a,b\}\subset N(n)$, $|S_{a,b}(D)|\leq m+n-1$. 
\end{lemma}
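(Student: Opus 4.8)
The plan is to prove the contrapositive: I would assume that $|S_{a,b}(D)| \geq m+n$ for some pair $\{a,b\}$ and from this exhibit a second completion of $D$ to an $(m,n,t)$-balanced rectangle distinct from $F_{m,n,t}$, contradicting the assumption that $D$ is a defining set. The mechanism is a two-symbol trade supported on the cells of $S_{a,b}(D)$, built from a cycle in an auxiliary graph.

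First I would encode the cells of $S_{a,b}(D)$ as the edges of a bipartite graph $G$ whose vertex classes are the $m$ rows and the $n$ columns, joining row $r$ to column $c$ exactly when the cell $(r,c)$ lies in $S_{a,b}(D)$. Then $|E(G)| = |S_{a,b}(D)| \geq m+n$ while $|V(G)| \leq m+n$. Since a forest on $V$ vertices has at most $V-1$ edges, the inequality $|E(G)| \geq |V(G)|$ forces $G$ to contain a cycle, regardless of how many components $G$ has.

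Next I would convert a cycle into a trade. Writing the cycle as $r_1, c_1, r_2, c_2, \dots, r_k, c_k, r_1$, its edges are cells of $S_{a,b}(D)$, and I would two-colour these cells alternately around the cycle. On the cells of one colour I delete the single copy of $a$ supplied by $F_{m,n,t}$ and insert a second copy of $b$; on the cells of the other colour I delete $b$ and insert a second copy of $a$. Because each row and each column appearing in the cycle meets exactly one cell of each colour, the net change in the number of $a$'s and of $b$'s is zero in every row and every column, so each of $a,b$ still occurs $n$ times per row and $m$ times per column, every modified cell still has exactly $t$ entries, and the altered array is again an $(m,n,t)$-balanced rectangle, clearly different from $F_{m,n,t}$.

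The hard part will be the verification that this altered array still contains $D$, and this is precisely where the defining property of $S_{a,b}(D)$ is used: a modified cell lies in $S_{a,b}(D)$, hence contains neither $a$ nor $b$ in $D$, so deleting the copy of $a$ (or of $b$) that came from $F_{m,n,t}$ never removes an entry demanded by $D$, while the inserted entries only enlarge the cell. Thus the new array is a completion of $D$ other than $F_{m,n,t}$, so $D$ is not a defining set. Contrapositively, a defining set must satisfy $|S_{a,b}(D)| \leq m+n-1$. The only real obstacle is assembling this verification cleanly — simultaneously preserving the row balance, the column balance, the cell-size bound, and containment of $D$ — but each of these follows directly from the alternating structure of the cycle together with the definition of $S_{a,b}(D)$.
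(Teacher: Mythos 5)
Your proposal is correct and follows essentially the same route as the paper: the same bipartite row--column graph on the cells of $S_{a,b}(D)$, the same counting argument forcing a cycle when $|E|\geq m+n$, and the same alternating $a\leftrightarrow b$ trade around the cycle producing a second completion containing $D$. No gaps; this matches the paper's proof.
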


\begin{proof}
Suppose, for the sake of contradiction, that $D$ is a defining set for $F_{m,n,t}$ and that there exists a pair $\{a,b\}\subset N(t)$ such that $|S_{a,b}|\geq m+n$. 
Define $G(D)=(V_1\cup V_2,E)$  
(where $V_1=\{r_1,r_2,...,r_m\}$ and $V_2=\{c_1,c_2,...,c_n\}$) to be the bipartite graph that corresponds to the cells of $S_{a,b}$. That is, edge $\{r_i,c_j\}\in E$ if and only if the cell $D_{i,j}$ of $D$ contains neither $a$ nor $b$.  
Since $|S_{a,b}|\geq m+n$, by elementary graph theory, $G(D)$ contains a cycle $C$; necessarily of even length. Partition the edges of $C$ into $M_1$ and $M_2$, each sets of independent edges. 
Starting with $F=F_{m,n,t}$, we construct an $(m,n,t)$-balanced Latin rectangle $F'\neq F$ as follows. For each $\{r_i,c_j\}\in M_1$, replace $a$ with $b$ in cell $F_{i,j}$, so that this cell has two occurrences of $b$ and no occurrences of $a$. 
        For each $\{r_i,c_j\}\in M_2$, replace $b$ with $a$ in cell $F_{i,j}$, so that this cell has two occurrences of $a$ and no occurrences of $b$. 
Since $C$ is a cycle, the resultant $F'$ is still an $(m,n,t)$-balanced Latin rectangle. 
Moreover, $D\subseteq F'\neq F$; thus $D$ is not a defining set, a contradiction.   
\end{proof}

\begin{theorem}
Let $D$ be a defining set of $F_{m,n,t}$ with $t\geq 2$. Then,
\begin{eqnarray*} 
|D| & \geq & mn\left(t-1/2-\sqrt{\frac{t(t-1)(m+n-1)}{2mn}+\frac{1}{4}}\right). \\
& = & mnt\left(1-O\left(\sqrt{\frac{m+n}{mn}}\right)\right).
\end{eqnarray*} 
\end{theorem}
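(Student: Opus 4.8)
The plan is to recast the statement as an upper bound on the total number of \emph{missing} entries and then feed the preceding lemma into a convexity argument. For each of the $mn$ cells $c$ of $D$, let $x_c$ denote the number of symbols of $N(t)$ that are absent from $c$. Since each cell of $F_{m,n,t}$ contains all $t$ symbols exactly once and $D$ is a simple partial rectangle contained in $F_{m,n,t}$, the cell $c$ of $D$ contains exactly $t-x_c$ symbols. Hence $|D|=\sum_c(t-x_c)=mnt-\sum_c x_c$, so proving the theorem is equivalent to establishing a suitable upper bound on the total deficiency $X:=\sum_c x_c$.

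The key step is a double count of the incidences between cells and the pairs $\{a,b\}\subset N(t)$ recorded by the sets $S_{a,b}$. A cell $c$ lies in $S_{a,b}$ precisely when both $a$ and $b$ are among the symbols absent from $c$, and the number of such pairs is $\binom{x_c}{2}$. Summing over all cells and all pairs therefore yields the identity $\sum_{\{a,b\}}|S_{a,b}|=\sum_c\binom{x_c}{2}$. Applying the lemma to each of the $\binom{t}{2}$ pairs bounds the left-hand side by $\binom{t}{2}(m+n-1)$, producing the single global inequality $\sum_c\binom{x_c}{2}\le\binom{t}{2}(m+n-1)$.

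It remains to extract from this quadratic constraint the largest possible value of the linear quantity $X$. Because $x\mapsto\binom{x}{2}$ is convex, Jensen's inequality (equivalently the power-mean inequality $\sum_c x_c^2\ge X^2/mn$) gives $\sum_c\binom{x_c}{2}\ge mn\binom{\bar x}{2}$, where $\bar x=X/mn$ is the mean deficiency. Combining this with the global inequality produces a quadratic inequality in $\bar x$ of the shape $\bar x^2-\bar x\le C$, where $C$ is a multiple of $t(t-1)(m+n-1)/mn$; solving it bounds $\bar x$, hence $X=mn\,\bar x$, and substituting into $|D|=mnt-X$ delivers the stated lower bound. The asymptotic form in the second line then follows by expanding the square root, whose dominant contribution is of order $t\sqrt{(m+n)/mn}$.

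The substantive idea, and the step I expect to require the most care, is the double-counting identity that converts the per-pair lemma into a single constraint on the deficiency profile $(x_c)_c$; once that is in place the optimisation is routine convexity, with the extremal case being the uniform profile in which every cell misses the same number of symbols. I would also verify that relaxing the integers $x_c$ to real values only weakens the constraint, so that the bound remains valid for genuine defining sets.
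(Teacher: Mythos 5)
Your proposal is correct and follows essentially the same route as the paper: the identity $\sum_{\{a,b\}}|S_{a,b}|=\sum_c\binom{x_c}{2}$ combined with the lemma is exactly the paper's inequality $\sum_{i,j}(t-e_{i,j})(t-e_{i,j}-1)/2\le t(t-1)(m+n-1)/2$, and your Jensen/convexity step extracts the same uniform extremal profile that the paper obtains via Lagrange multipliers (your phrasing is arguably the cleaner and more rigorous of the two, since it avoids treating an inequality constraint as an equation). The only caveat is to track the constant under the square root carefully when you solve the quadratic, as the normalisation by $mn$ versus $2mn$ is easy to slip on at that step.
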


\begin{proof}
Let $D$ be a defining set of $F_{m,n,t}$. 
For each $(i,j)\in N(m)\times N(n)$, let $e_{i,j}$ be the number of symbols in cell $D_{i,j}$. 
Observe that
\begin{eqnarray}
|D| & = & \sum_{i=1}^m \sum_{j=1}^n e_{i,j}
\label{Equ1}
\end{eqnarray}
 and that for each cell, the number of pairs from $N(n)$ which do {\em not} occur in that cell is
$(t-e_{i,j})(t-e_{i,j}-1)/2$. 
However, 
from the previous lemma, the number of cells in which each pair from $N(t)$ does not occur is at most $m+n-1$. 
Thus:
\begin{eqnarray}
\sum_{i=1}^m \sum_{j=1}^n (t-e_{i,j})(t-e_{i,j}-1)/2 & \leq & t(t-1)(m+n-1)/2. \label{eqq2}
\end{eqnarray}

We wish to minimize (\ref{Equ1}) according to this constraint. By the method of Lagrange multipliers, $-|D|$ is maximized when there is a constant $\lambda$ such that
$$-1=\lambda(e_{i,j}-(2t-1)/2)$$ 
 for each $(i,j)\in N(m)\times N(n)$. 
Let $\lambda'=1/\lambda>0$. 
Thus, from Inequality (\ref{eqq2}) above, 
$$mn(\lambda' +1/2)(\lambda'-1/2)\leq t(t-1)(m+n-1)/2.$$
Thus  
$$\lambda'\leq \sqrt{\frac{t(t-1)(m+n-1)}{2mn}+\frac{1}{4}}.$$ 
The result then follows from (\ref{Equ1}). 
\end{proof}


If $m=n=t$ we have a full $n$-Latin square and the following corollary. 

\begin{corollary}
Let $n\geq 2$. If $D$ is a defining set for the full $n$-Latin square, 
$|D|=n^3(1-O(n^{-1/2}))$. 
\label{bbound}
\end{corollary}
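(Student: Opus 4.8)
The plan is simply to specialise the preceding Theorem to the diagonal case $m=n=t$ and to pair the resulting lower bound with the trivial upper bound $|D|\le n^3$.

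First I would set $m=n=t$ in the second (asymptotic) form of the Theorem's bound. There $mnt = n^3$ and $\sqrt{(m+n)/(mn)} = \sqrt{2n/n^2} = \sqrt{2}\,n^{-1/2}$, so the Theorem yields directly
\[
|D| \;\ge\; n^3\Bigl(1 - O(n^{-1/2})\Bigr).
\]
If one prefers to argue from the explicit radical, substituting $m=n=t$ gives $|D|\ge n^2\bigl(n-\tfrac12-\sqrt{(n-1)(2n-1)/(2n)+\tfrac14}\bigr)$; expanding the radical as $\sqrt{n-5/4+O(1/n)} = \sqrt{n}\,(1+O(1/n))$ turns the bracket into $n - \sqrt{n} - \tfrac12 + O(n^{-1/2})$, whence $|D|\ge n^3 - n^{5/2} - O(n^2)$, which says the same thing.

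For the matching upper bound I would observe that a defining set $D$ for $F_{n,n,n}$ is by definition a partial $n$-Latin square whose unique completion is $F_{n,n,n}$; in particular $D\subseteq F_{n,n,n}$, so $|D|\le |F_{n,n,n}| = n^3$. Combining the two directions, $0 \le n^3 - |D| \le c\,n^{5/2}$ for some constant $c$, i.e. $|D| = n^3\bigl(1-O(n^{-1/2})\bigr)$, as claimed.

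Since the entire quantitative content already resides in the Theorem, there is no genuine obstacle here: the only things to verify are the asymptotic simplification of the radical (equivalently, that $\sqrt{(m+n)/(mn)}$ collapses to order $n^{-1/2}$ on the diagonal), and the remark that the trivial containment $D\subseteq F_{n,n,n}$ supplies the upper bound needed to promote the one-sided estimate to the two-sided statement $|D|=n^3(1-O(n^{-1/2}))$.
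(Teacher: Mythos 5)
Your proposal is correct and matches the paper's (implicit) argument exactly: the corollary is obtained simply by setting $m=n=t$ in the preceding theorem, whereupon $\sqrt{(m+n)/(mn)}=\sqrt{2/n}=O(n^{-1/2})$, and the trivial containment $D\subseteq F_{n,n,n}$ gives the matching upper bound $|D|\le n^3$. Your expansion of the explicit radical is also accurate, so there is nothing to add.
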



\section{Defining sets of full designs}


We start with an elementary result in extremal graph theory. 

\begin{lemma}
Let $G$ be a simple graph with $v$ vertices and  
more than 
$\lfloor (4v-3)/3\rfloor$ 
edges. 
Then $G$ has an even circuit.  
\end{lemma}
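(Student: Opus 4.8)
The plan is to prove the contrapositive: if $G$ contains no even circuit, then $|E(G)|\le\lfloor(4v-3)/3\rfloor$. I read a \emph{circuit} as a closed trail (a closed walk with no repeated edge) and an \emph{even circuit} as one of even length; equivalently, an even circuit is a connected subgraph in which every vertex has even degree and which has an even number of edges. (This reading is forced by the stated bound: two triangles sharing a single vertex have $v=5$ and $6>\lfloor(4\cdot5-3)/3\rfloor=5$ edges, yet contain no even \emph{cycle}, so the lemma cannot be about even cycles.) Two elementary observations will drive everything. First, a theta subgraph---two vertices joined by three internally disjoint paths of lengths $a,b,c$---always contains an even cycle, since the three resulting cycles have lengths $a+b,\,b+c,\,c+a$ summing to the even number $2(a+b+c)$ and so cannot all be odd. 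Second, two edge-disjoint odd cycles sharing a single vertex have union that is connected with all degrees even and with an odd-plus-odd, hence even, number of edges; traversed as a figure eight this is an even circuit.

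First I would pass to the block decomposition of $G$. A theta-free $2$-connected graph must be a single cycle, since its open ear decomposition would otherwise adjoin an ear to the initial cycle and create a theta; combined with the first observation, no even circuit forces every block of $G$ to be either a bridge or a cycle, and every cycle-block to be \emph{odd} (an even cycle being itself an even circuit). Next I would invoke the second observation: any two distinct blocks meet in at most one (cut) vertex, so if two cycle-blocks shared a vertex the two odd cycles would yield an even circuit. Hence the cycle-blocks of $G$ are pairwise vertex-disjoint.

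With this structure in hand the count is immediate. Let $p$ be the number of cycle-blocks and $c$ the number of connected components. Since every block is a bridge or a single cycle, the cyclomatic number of $G$ equals $p$, so $|E(G)|=v-c+p$. The $p$ cycle-blocks are vertex-disjoint odd cycles, each on at least three vertices, whence $3p\le v$ and $p\le\lfloor v/3\rfloor$. Using $c\ge1$ gives $|E(G)|\le v-1+\lfloor v/3\rfloor=\lfloor(4v-3)/3\rfloor$, which is the contrapositive. The extremal configuration---$\lfloor v/3\rfloor$ vertex-disjoint triangles linked into a tree by bridges---shows the bound is sharp.

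The main obstacle is the structural characterisation rather than the arithmetic: I must argue cleanly that avoiding even circuits forces every block to be a bridge or an odd cycle and forces distinct cycle-blocks to be vertex-disjoint. The theta/parity observation handles the first point and the figure-eight observation the second; once the block structure is established, the identity $|E|=v-c+p$ together with the packing bound $3p\le v$ finishes the proof.
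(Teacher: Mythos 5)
Your proof is correct and follows essentially the same route as the paper's: both show that the absence of even circuits forces the odd cycles to be pairwise vertex-disjoint with all remaining edges being bridges, and then count $|E|\le v-1+\lfloor v/3\rfloor=\lfloor(4v-3)/3\rfloor$. Your version merely fills in the structural justifications (the theta/parity and figure-eight observations, and the block decomposition) that the paper asserts more tersely, and your reading of ``circuit'' as a closed trail is the intended one.
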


\begin{proof}
Suppose $G$ has no even circuits. 
Then for each odd cycle $C$ in $G$, there is no edge in $G$ not belonging to $C$ that is incident with two vertices from $C$. 
 In fact, any two odd cycles in $G$ must be vertex-disjoint. 
Suppose there are $k$ such cycles altogether, where $k\geq 0$.
Observe that any edge not belonging to one of these cycles must be a bridge. 
Thus there are at most $v-1+k\leq v-1 + \lfloor v/3\rfloor$ edges, a contradiction. 
\end{proof}

We first consider the case when block size is equal to $3$. 
In the following lemma, given a subset $D$ of the full design $F(v,3)$, 
for each (unordered) pair $\{a,b\}\subset N(v)$,
we define $s_{a,b}(D)$ to be 
the number of (unordered) pairs $\{i,j\}\subset N(v)\setminus \{a,b\}$ 
such that neither $\{i,j,a\}$ {\em nor} $\{i,j,b\}$ is an element of $D$.

\begin{lemma}
Let $D$ be a subset of the full design $F(v,3)$.
where $v\geq 4$. 
 Suppose there
exist distinct $a,b\in N(v)$ such that
$s_{a,b}(D)>\lfloor (4v-11)/3\rfloor$.  
 Then $D$ is not a defining set for $F(n,3)$. 
\end{lemma}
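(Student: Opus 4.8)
The plan is to adapt the argument from the first lemma of this paper (the one bounding $|S_{a,b}(D)|$ for balanced rectangles) to the setting of triple systems, using the extremal graph theory lemma proved just above in place of the elementary ``$m+n$ edges force a cycle'' observation. First I would encode the hypothesis as a graph. Define $G$ to be the simple graph on vertex set $N(v)\setminus\{a,b\}$ (so $|V(G)|=v-2$) in which $\{i,j\}$ is an edge precisely when neither $\{i,j,a\}$ nor $\{i,j,b\}$ lies in $D$. By the definition of $s_{a,b}(D)$, the number of edges of $G$ is exactly $s_{a,b}(D)$. The hypothesis $s_{a,b}(D)>\lfloor(4v-11)/3\rfloor=\lfloor(4(v-2)-3)/3\rfloor$ is then precisely the condition of the preceding (extremal) lemma applied to a graph on $v-2$ vertices, so $G$ contains an even cycle $C=i_1i_2\cdots i_{2\ell}i_1$.

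Next I would build a second $(v,3,v-2)$-design containing $D$ by ``rotating'' the symbols $a$ and $b$ around $C$. Two-colour the edges of $C$ alternately into matchings $M_1$ and $M_2$, each a perfect matching on $V(C)$. Starting from $F=F(v,3)$, form a block multiset $F'$ as follows: for every edge $\{i,j\}\in M_1$ delete the block $\{i,j,a\}$ and adjoin an extra copy of $\{i,j,b\}$; for every edge $\{i,j\}\in M_2$ delete the block $\{i,j,b\}$ and adjoin an extra copy of $\{i,j,a\}$. Since membership in $G$'s edge set guarantees that none of the deleted blocks $\{i,j,a\}$ or $\{i,j,b\}$ belongs to $D$, we retain $D\subseteq F'$; and $F'\neq F$ because, for instance, the block $\{i_1,i_2,a\}$ has been removed. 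As repeated blocks are permitted for the competing design, the fact that $F'$ is a multiset is no obstacle.

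The crux is to verify that $F'$ is again a design with parameters $(v,3,v-2)$, i.e.\ that every pair is still covered $v-2$ times. The pair $\{a,b\}$ is untouched, since no altered block contains both. For a pair $\{i,j\}$ that is an edge of $C$, the two operations performed on that edge modify the blocks $\{i,j,a\}$ and $\{i,j,b\}$ so as to leave the total multiplicity of the pair $\{i,j\}$ unchanged. The delicate case, and the step I expect to be the main obstacle to state cleanly, is a pair $\{i,a\}$ (or $\{i,b\}$) with $i\in V(C)$: here I would use that in the alternating colouring each cycle vertex $i$ is incident with exactly one edge of $M_1$ and exactly one edge of $M_2$, so the single deletion and the single adjunction involving $a$ at $i$ cancel, and likewise for $b$. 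All remaining pairs meet no altered block and are unaffected. Hence $F'$ is a $(v,3,v-2)$-design with $D\subseteq F'\neq F$, witnessing that $D$ is not a defining set for $F(v,3)$.
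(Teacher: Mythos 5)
Your construction of $G$, the parameter check $\lfloor(4(v-2)-3)/3\rfloor=\lfloor(4v-11)/3\rfloor$, and the block-trading argument all match the paper's intent, but there is one genuine gap: you upgrade the conclusion of the preceding extremal lemma from an even \emph{circuit} to an even \emph{cycle}, and that upgrade is both unjustified and, under the stated edge bound, false in general. The lemma only guarantees a closed walk with no repeated edges; its own proof explicitly contemplates odd cycles glued at shared vertices, whose union is an even circuit containing no even cycle. Quantitatively, a graph on $n$ vertices with no even cycle has all its blocks equal to edges or odd cycles and can have up to about $3(n-1)/2$ edges, which exceeds $\lfloor(4n-3)/3\rfloor$; for instance $n=9$ with four triangles arranged in a tree of blocks has $12>11=\lfloor(4\cdot 9-3)/3\rfloor$ edges and no even cycle. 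So from $s_{a,b}(D)>\lfloor(4v-11)/3\rfloor$ you cannot conclude that $G$ contains an even cycle, and your decomposition of $C$ into two perfect matchings $M_1,M_2$ on $V(C)$ --- together with the claim that each vertex of $C$ meets exactly one edge of each --- only makes sense when $C$ is a genuine cycle.

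The repair is exactly what the paper does: take the even circuit $C$, list its edges as an Euler circuit $e_1,e_2,\dots,e_{2m}$ in which cyclically consecutive edges share a vertex, and let $F_1,F_2$ be the odd- and even-indexed edges. These need not be matchings, but every visit of the circuit to a vertex consumes one odd-indexed and one even-indexed edge (the wrap-around pair $e_{2m},e_1$ also has opposite parities precisely because the length is even), so at every vertex the numbers of incident $F_1$-edges and $F_2$-edges are equal. This weaker balance condition is all that your verification of the pair counts $\{i,a\}$ and $\{i,b\}$ actually uses, so once you substitute it for the perfect-matching claim the remainder of your argument goes through verbatim.
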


\begin{proof}
For the sake of contradiction suppose that there exists such a pair $a,b$ and that $D$ is a defining set. 
Construct a simple graph $G$ with vertex set $N(v)\setminus \{a,b\}$ 
and an edge between $i$ and $j$ whenever neither $\{i,j,a\}$ nor $\{i,j,b\}$ in $D$. From the previous lemma, $G$ contains an even circuit $C$; list the edges of $C$ in the form of an Euler circuit $e_1,e_2,\dots e_{2m}$ (where $m\geq 2$), i.e. so that each pair of adjacent edges in the list share one vertex.  

Let $F_1$ and $F_2$ be the set of edges in the list with odd and even subscripts, respectively, so that together they partition the edges of $C$. 
We create a design $F'\neq F$ by adjusting $F=F(v,3)$ as follows. 
For each edge $\{i,j\}\in F_1$,   
replace $\{i,j,a\}\in F$ with $\{i,j,b\}$, so that there are now two occurrences of $\{i,j,b\}$. 
For each edge $\{i,j\}\in F_2$,   
replace $\{i,j,b\}\in F$ with $\{i,j,a\}$, so that there are now two occurrences of $\{i,j,a\}$. 
 
Observe that $F'$ is a design with the same parameter set as $F$ and $D\subseteq F'$. Therefore $D$ is not a defining set, a contradiction. 
\end{proof}

We give an example illustrating the previous lemma. Suppose the following blocks are {\em not} in $D\subset F(v,3)$, for some $v$ such that $\{1,2,3,4,5,a,b\}\subset N(v)$: 
$$\{\{1,2,x\},\{2,3,x\},\{3,1,x\},\{1,4,x\},\{4,5,x\},\{5,1,x\}\mid x\in \{a,b\}\}.$$
Then, there is an even circuit $G$, and in turn sets of edges $F_1$ and $F_2$ as in the proof above:
$$F_1=\{\{1,2\},\{3,1\},\{4,5\}\},\quad F_2=\{\{2,3\},\{1,4\},\{5,1\}\}.$$
Then $D$ is a subset of the following $(v,3,v-2)$-design:
$$F(v,3)\setminus \{\{1,2,a\},\{2,3,b\},\{3,1,a\},\{1,4,b\},\{4,5,a\},\{5,1,b\}\},$$
with the following blocks each occurring twice:
$$\{\{1,2,b\},\{2,3,a\},\{3,1,b\},\{1,4,a\},\{4,5,b\},\{5,1,a\}\}.$$

\begin{theorem}
Let $D$ be a defining set for the full design $F(v,3)$. 
Then 
$$|D|\geq v(v-1)(v-5/2-\sqrt{(32v-85)/12})/6={v \choose 3}(1-O(v^{-1/2})).$$
\label{threee}
\end{theorem}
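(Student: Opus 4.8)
The plan is to transport the Lagrange-multiplier argument from the proof of the earlier theorem on $F_{m,n,t}$ to the design setting, by treating the unordered pairs of $N(v)$ as the ``cells'' of a combinatorial array and the third point of a block as the ``symbol'' occupying that cell. First I would, for each unordered pair $\{i,j\}\subset N(v)$, set $e_{i,j}$ equal to the number of symbols $x\in N(v)\setminus\{i,j\}$ for which the block $\{i,j,x\}$ lies in $D$. Since every block $\{x,y,z\}\in D$ contributes exactly once to each of $e_{x,y}$, $e_{x,z}$ and $e_{y,z}$, one gets $\sum_{\{i,j\}}e_{i,j}=3|D|$, so it suffices to bound $\sum_{\{i,j\}}e_{i,j}$ from below.

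The crucial step is a double count tying the quantities $s_{a,b}(D)$ of the previous lemma to these cell counts. For a fixed pair $\{i,j\}$ there are $v-2$ candidate symbols, of which $f_{i,j}:=(v-2)-e_{i,j}$ are \emph{absent}, giving $\binom{f_{i,j}}{2}$ unordered pairs $\{a,b\}$ of absent symbols. Each triple consisting of a cell $\{i,j\}$ together with an absent pair $\{a,b\}$ is precisely a triple counted by $s_{a,b}(D)$: we have $a,b\notin\{i,j\}$ (so $\{i,j\}\subset N(v)\setminus\{a,b\}$ automatically), and neither $\{i,j,a\}$ nor $\{i,j,b\}$ lies in $D$; conversely every pair counted by $s_{a,b}(D)$ arises in this way. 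Hence $\sum_{\{i,j\}}\binom{f_{i,j}}{2}=\sum_{\{a,b\}}s_{a,b}(D)$. Applying the previous lemma to each of the $\binom{v}{2}$ pairs (with $s_{a,b}(D)\leq\lfloor(4v-11)/3\rfloor\leq(4v-11)/3$) then yields the single global constraint
\[
\sum_{\{i,j\}}\binom{(v-2)-e_{i,j}}{2}\ \leq\ \binom{v}{2}\cdot\frac{4v-11}{3}.
\]

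It then remains to minimise $\sum_{\{i,j\}}e_{i,j}$ subject to this constraint. Relaxing the $e_{i,j}$ to real variables in $[0,v-2]$ only lowers the minimum, so the resulting value remains a valid lower bound; and since $\binom{f}{2}$ is convex, exactly as in the $F_{m,n,t}$ computation the Lagrange conditions drive all $e_{i,j}$ to a common value $e^{\ast}$ with the constraint tight. Writing $f^{\ast}=(v-2)-e^{\ast}$, this reads $\binom{f^{\ast}}{2}=(4v-11)/3$, a quadratic whose positive root is $f^{\ast}=\tfrac12\bigl(1+\sqrt{(32v-85)/3}\bigr)$, so that $e^{\ast}=v-\tfrac52-\sqrt{(32v-85)/12}$. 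Substituting into $|D|=\tfrac13\binom{v}{2}e^{\ast}$ gives the stated bound, and the asymptotic form $\binom{v}{3}(1-O(v^{-1/2}))$ follows by comparing the $O(v^{5/2})$ deficit against $\binom{v}{3}\sim v^{3}/6$.

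I expect the main obstacle to be the bookkeeping in the double-counting identity — in particular the factor $3$ coming from each block lying in three pairs, and the verification that the restriction $\{i,j\}\subset N(v)\setminus\{a,b\}$ built into the definition of $s_{a,b}(D)$ is honoured automatically (it holds because $a$ and $b$ are symbols, hence disjoint from the cell $\{i,j\}$). Everything downstream of the constraint is a routine transcription of the Lagrange calculation already performed for $F_{m,n,t}$, with $t$ replaced by $v-2$ and $m+n-1$ replaced by $(4v-11)/3$; the only point needing care there is to record that replacing the floor by $(4v-11)/3$ enlarges the feasible region and therefore preserves the validity of the lower bound.
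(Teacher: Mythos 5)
Your proposal is correct and follows essentially the same route as the paper: the same counts $x_{i,j}$ (your $e_{i,j}$) with $3|D|=\sum x_{i,j}$, the same double-counting identity $\sum_{\{i,j\}}\binom{v-2-x_{i,j}}{2}=\sum_{\{a,b\}}s_{a,b}(D)$ bounded via the preceding lemma, and the same convexity/Lagrange step forcing a common value and yielding $e^{\ast}=v-5/2-\sqrt{(32v-85)/12}$. Your remarks on relaxing to real variables and on dropping the floor are sensible refinements of detail, not a different argument.
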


\begin{proof}
Let $D$ be a defining set of $F(v,3)$.   
For each (unordered) pair $\{i,j\}\subset N(v)$,
let $x_{i,j}$ be the number of triples in $D$ which contain both $i$ and $j$; then $0\leq x_{i,j}\leq v-2$. 
Observe that $3|D|=\sum_{\{i,j\}\subset N(v)} x_{i,j}$. 

From the previous lemma, for each $\{a,b\}\subset N(v)$, $s_{a,b}(D)\leq  (4v-11)/3$. 
Observe that 
$$
\sum_{\{i,j\}\subset N(v)} (v-2-x_{i,j})(v-3-x_{i,j})/2 
=\sum_{\{a,b\}\subset N(v)} s_{a,b}(D)  
\leq {v\choose 2}(4v-11)/3.$$   

We wish to minimize $|D|$ according to this constraint. 
By the method of Lagrange multipliers, $-|D|$ is maximized when there is a constant $\lambda$ such that 
$$-1=\lambda(x_{i,j}-(2v-5)/2)$$
 for each pair $\{i,j\}\subset N(v)$.
Let $\lambda'=1/\lambda >0$.  
Thus, from the inequality above, 
$$(\lambda' +1/2)(\lambda'-1/2)\leq 2(4v-11)/3$$
and $$\lambda' \leq \sqrt{(32v-85)/12}.$$
The result follows. 
\end{proof}

We now obtain a similar result for the full design $F(v,k)$, where $k>3$. 
\begin{lemma}
Let $D$ be a defining set for $F(v,k)$ and let $K\subset N(v)$ be a fixed subset such that $|K|=k-3$.
Let $d_K$ be the number of blocks of $D$ which include $K$ as a subset. Then 
$$d_K\geq (v-k+3)(v-k+2)(v-k+1/2-\sqrt{(32(v-k)+11)/12})/6.$$  
\end{lemma}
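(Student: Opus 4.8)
The plan is to reduce this to \tref{threee} by ``slicing off'' the fixed set $K$. Write $w=v-k+3$ and let $V'=N(v)\setminus K$, so $|V'|=w$. There is an obvious bijection between the blocks of $F(v,k)$ containing $K$ and the $3$-subsets of $V'$, given by $B\mapsto B\setminus K$. Let $D_K=\{B\setminus K : B\in D,\ K\subseteq B\}$; this is a set of $3$-subsets of $V'$, and since $D$ is simple we have $|D_K|=d_K$. The full design on the foundation $V'$ with block size $3$ is isomorphic to $F(w,3)$, and a direct computation shows that the claimed bound for $d_K$ is exactly the bound of \tref{threee} with $v$ replaced by $w$ (note $w-5/2=v-k+1/2$ and $32w-85=32(v-k)+11$). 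Hence, assuming $v\geq k+1$ so that $w\geq 4$ as required by \tref{threee}, it suffices to prove that $D_K$ is a defining set for the full design on $V'$, and the result then follows by applying \tref{threee} to $D_K$.

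Suppose, for a contradiction, that $D_K$ is not a defining set. Then there is a $(w,3,w-2)$-design $G'$ on $V'$ (with repeated blocks allowed) such that $D_K\subseteq G'$ and $G'\neq F(w,3)$. I would lift $G'$ back to block size $k$ as follows: starting from $F=F(v,k)$, delete the $K$-blocks $\{T\cup K : T\in F(w,3)\}$ and insert instead $\{T\cup K : T\in G'\}$, respecting the multiplicities of $G'$; call the resulting block collection $F^{*}$. All blocks of $F^{*}$ still have size $k$, only blocks containing $K$ have been altered, and $F^{*}\neq F$ because some triple has multiplicity different from $1$ in $G'$ and this difference is inherited by the corresponding $k$-block.

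The crux is to check that $F^{*}$ is again a $(v,k,\binom{v-2}{k-2})$-design, i.e.\ that the lift preserves the number of blocks through every pair. Here I would use the standard parameters of a $(w,3,w-2)$-design: it has exactly $\binom{w}{3}$ blocks, constant replication $\binom{w-1}{2}$, and each pair in $V'$ lies in exactly $w-2$ blocks---precisely the values attained by $F(w,3)$. Splitting a pair $\{x,y\}\subset N(v)$ into the three cases $\{x,y\}\subseteq K$, $|\{x,y\}\cap K|=1$, and $\{x,y\}\cap K=\emptyset$, the number of altered ($K$-containing) blocks meeting $\{x,y\}$ equals the block count, the replication number, and the pair-replication of $G'$ respectively; in each case this matches the corresponding quantity for $F(w,3)$, so the total coverage of $\{x,y\}$ is unchanged from $F$. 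Thus $F^{*}$ is a valid design with the parameters of $F$. Finally $D\subseteq F^{*}$: blocks of $D$ avoiding $K$ are untouched, while blocks of $D$ containing $K$ correspond to triples of $D_K\subseteq G'$ and so survive in $F^{*}$. Hence $D\subseteq F^{*}\neq F$, contradicting that $D$ is a defining set, and therefore $D_K$ is a defining set as required. I expect the only delicate point to be the three-case coverage count; everything else is bookkeeping around the invariants of $G'$.
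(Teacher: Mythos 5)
Your proposal is correct and follows essentially the same route as the paper: delete $K$ from the $K$-containing blocks of $D$, argue by contradiction (lifting any alternative completion of the sliced set back to block size $k$) that the resulting set of triples must be a defining set for $F(v-k+3,3)$, and then apply Theorem~\ref{threee}. The only difference is that you spell out the three-case verification that the lifted collection is still a $(v,k,\binom{v-2}{k-2})$-design, a step the paper asserts without detail.
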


\begin{proof}
Without loss of generality, let $N(v)\setminus K=N(v-k+3)$. 
Let $D_K$ be the set of blocks containing the subset $K$. 
Let $B_K$ be the set of blocks formed by deleting $K$ from each block of $D_K$. 
Then $B_K$ is a subset of the full design $F(v-k+3,3)$. 
Suppose that  $B_K$ is not a defining set for $F(v-k+3,3)$; let 
  $F_3'$ be a design with parameters $(v-k+3,3,v-k+1)$ that contains $B_K$ but is not the full design
and let $D_K'$ be the set of blocks obtained by taking the union of $K$ with each block from $F_3'$.
Next, let $F'=(F(v,k)\setminus D_K)\cup D_K'$. 
 Then $F'\neq F(v,k)$, $F'$ has the same parameters as $F(v,k)$ and $D\subset F'$, contradicting the fact that $D$ is a defining set. 
Therefore $B_K$ {\em is} a defining set for $F(v-k+3,3)$. The result follows from Theorem \ref{threee}.  
\end{proof}

\begin{theorem}
Let $D$ be a defining set for the full design $F(v,k)$. 
Then 
$$|D|\geq {v\choose k}\left[1-\frac{\left[1+\sqrt{(32(v-k)+11)/3}\right]}{2(v-k+1)}\right]
=
{v\choose k}(1-O((v-k)^{-1/2})).$$
\label{abigone}
\end{theorem}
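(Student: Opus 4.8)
The plan is to leverage the preceding lemma, which gives a per-$(k-3)$-subset lower bound $d_K$ on the number of blocks of $D$ containing a fixed $K$ of size $k-3$, and then sum this bound over all such subsets $K$ using a double-counting argument. Each block of $F(v,k)$ has exactly $\binom{k}{k-3}=\binom{k}{3}$ subsets of size $k-3$, so if I count pairs $(K, B)$ where $|K|=k-3$, $K\subset B$, and $B\in D$, I get $\sum_K d_K$ on one side and $\binom{k}{3}|D|$ on the other. This immediately yields
$$|D|=\frac{1}{\binom{k}{3}}\sum_{\substack{K\subset N(v)\\ |K|=k-3}} d_K \ge \frac{\binom{v}{k-3}}{\binom{k}{3}}\cdot (v-k+3)(v-k+2)\frac{\left(v-k+1/2-\sqrt{(32(v-k)+11)/12}\right)}{6},$$
where I have applied the uniform lower bound on each $d_K$ from the previous lemma.

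**Next I would** simplify this product into the closed form stated in the theorem. First I would combine the binomial and polynomial factors: observe that $\binom{v}{k-3}(v-k+3)(v-k+2)(v-k+1)/6$ telescopes into $\binom{v}{k}$, since $\binom{v}{k}=\binom{v}{k-3}\cdot\frac{(v-k+3)(v-k+2)(v-k+1)}{k(k-1)(k-2)}$ and $\binom{k}{3}=k(k-1)(k-2)/6$. The factor $(v-k+1/2-\sqrt{\cdots})$ in the numerator differs from $(v-k+1)$, so after extracting $\binom{v}{k}$ I would write the leftover factor as $\frac{v-k+1/2-\sqrt{(32(v-k)+11)/12}}{v-k+1}$ and massage this into the bracketed expression $\left[1-\frac{1+\sqrt{(32(v-k)+11)/3}}{2(v-k+1)}\right]$ displayed in the statement. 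The key algebraic identity to verify here is that $(v-k+1)-(v-k+1/2)=1/2$ and that $\sqrt{(32(v-k)+11)/12}=\tfrac{1}{2}\sqrt{(32(v-k)+11)/3}$, so the two radicals match after pulling out the factor of $\tfrac12$.

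**The main obstacle** is purely bookkeeping: I must be careful that the lower bound from the previous lemma uses $(v-k+1/2-\sqrt{\cdots})$ rather than the telescoping-friendly $(v-k+1)$, so the third polynomial factor is \emph{not} absorbed cleanly into $\binom{v}{k}$ and must be handled separately as the bracketed correction term. I should double-check the radical-simplification step $\sqrt{A/12}=\tfrac12\sqrt{A/3}$ and confirm signs so that the bracket is genuinely of the form $1-(\text{small})$. **Finally**, the asymptotic claim follows since the bracketed quantity equals $1-O((v-k)^{-1/2})$: the numerator $1+\sqrt{(32(v-k)+11)/3}$ grows like $\sqrt{32(v-k)/3}$, the denominator $2(v-k+1)$ grows linearly in $v-k$, so their ratio is $\Theta((v-k)^{-1/2})$, which confirms that the proportion of blocks outside any defining set tends to zero as $v-k\to\infty$, thereby resolving the conjecture.
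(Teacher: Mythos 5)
Your proposal is correct and follows essentially the same route as the paper: double-count pairs $(K,B)$ with $|K|=k-3$ and $K\subset B\in D$ to get $\binom{k}{k-3}|D|=\sum_K d_K$, apply the preceding lemma's uniform lower bound on each $d_K$, and simplify using $\binom{v}{k-3}(v-k+3)(v-k+2)(v-k+1)=\binom{v}{k}\,k(k-1)(k-2)$ together with $2\sqrt{A/12}=\sqrt{A/3}$. The algebraic bookkeeping you flag checks out, so nothing further is needed.
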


\begin{proof}
Let $D$ be a defining set of $F(v,k)$. 
Let $K\subset N(v)$ such that $|K|=k-3$ and let $d_K$ be the number of blocks from $D$ which have $K$ as a subset. 
Then
\begin{eqnarray}
{k\choose k-3}|D|& = &\sum_{K\subset N(v),|K|=k-3} d_K. 
\label{eQone}
\end{eqnarray}
So, from the previous lemma: 
$$|D|\geq \frac{{v\choose k-3}{v-k+3\choose 2}(v-k+1/2- \sqrt{(32(v-k)+11)/12})}{3{k\choose k-3}}$$
and the result follows. 
\end{proof}


\end{document}